\journalname{Preprint}
\newtheorem{thm}{Theorem}
\newtheorem{lem}[thm]{Lemma}
\newtheorem{prop}[thm]{Proposition}
\newcommand{\R}{{\mathbb R}}
\newcommand{\be}[1]{\begin{equation}\label{#1}}
\newcommand{\ee}{\end{equation}}
\renewcommand{\(}{\left(}
\renewcommand{\)}{\right)}
\newcommand{\ird}[1]{\int_{\R^d}{#1}\,dx}
\newcommand{\nrm}[2]{\|{#1}\|_{\mathrm L^{#2}(\R^d)}}
\begin{document}

\title{Nonlinear diffusions: extremal properties of Barenblatt profiles, best matching and delays}
\titlerunning{Extremal properties of Barenblatt profiles and delays}
\author{Jean Dolbeault \and Giuseppe Toscani}
\institute{J. Dolbeault \at Ceremade (UMR CNRS no. 7534), Universit\'e Paris-Dauphine, Place de Lattre de Tassigny, F-75775 Paris C\'edex 16, France\\
\email{dolbeaul@ceremade.dauphine.fr}
\and G. Toscani \at University of Pavia Department of Mathematics, Via Ferrata~1, 27100 Pavia, Italy \\
\email{giuseppe.toscani@unipv.it}}
\date{January 9, 2015}
\maketitle
\begin{abstract} In this paper, we consider functionals based on moments and nonlinear entropies which have a linear growth in time in case of source-type solutions to the fast diffusion or porous medium equations, that are also known as Barenblatt solutions. As functions of time, these functionals have convexity properties for generic solutions, so that their asymptotic slopes are extremal for Barenblatt profiles. The method relies on scaling properties of the evolution equations and provides a simple and direct proof of sharp Gagliardo-Nirenberg-Sobolev inequalities in scale invariant form. The method also gives refined estimates of the growth of the second moment and, as a consequence, establishes the monotonicity of the delay corresponding to the best matching Barenblatt solution compared to the Barenblatt solution with same initial second moment. Here the notion of best matching is defined in terms of a relative entropy.
\keywords{Nonlinear diffusion equations \and Source-type solutions \and Gagliardo-Nirenberg-Sobolev inequalities \and Improved inequalities \and Scalings \and Second moment \and Temperature \and R\'enyi entropy \and Best matching Barenblatt profiles \and Delay}
\def\subclassname{{\bfseries Mathematics Subject Classification (2010)}\enspace}
\subclass{primary: 35K55 \and 35K65 \and 35B40;
secondary: 46E35 \and 39B62 \and 49J40}
\end{abstract}
\section{Introduction}\label{Sec:Intro}

Consider the nonlinear diffusion equation in $\R^d$, $d\ge1$,
\be{poro}
\frac{\partial u}{\partial t}=\Delta u^p\,,
\ee
with initial datum $u(x,t=0)=u_0(x)\ge0$ such that
\be{second}
\ird{u_0}=1\,,\quad\Theta(0)=\frac1d\ird{|x|^2\,u_0}<+\infty\,.
\ee
It is known since the work of A.~Friedman and S.~Kamin,~\cite{MR586735}, that the large time behavior of the solutions to~\eqref{poro} is governed by the source-type Barenblatt solutions
\be{ba-self}
\mathcal U_\star(t,x):=\frac1{\big(\kappa\,t^{1/\mu}\big)^d}\,\mathcal B_\star\!\(\frac x{\kappa\, t^{1/\mu}}\)
\ee
where
\[
\mu:=2+d\,(p-1)\,,\quad\kappa:=\Big|\frac{2\,\mu\,p}{p-1}\Big|^{1/\mu}
\]
and $\mathcal B_\star$ is the Barenblatt profile
\[
\mathcal B_\star(x):=
\begin{cases}
\big(C_\star-|x|^2\big)_+^{1/(p-1)}&\text{if}\;p>1\,,\\[5pt]
\big(C_\star+|x|^2\big)^{1/(p-1)}&\text{if}\;p<1\,.
\end{cases}
\label{ba}
\]
Here $(s)_+=\max\{s,0\}$ and, for $p>1-\frac2d$, the constant $C_\star$ is chosen so that $\ird{\mathcal B_\star(x)}=1$. Solution~\eqref{ba-self} was found around 1950 by Ya.B.~Zel'dovich and A.S.~Kompaneets. Later G.I.~Barenblatt analyzed the solution representing heat release from a point source for $p>1$. See~\cite{zel1950towards,barenblatt1952some}, and~\cite{MR2286292} for more~details.

The result of~\cite{MR586735}, subsequently improved in a number of papers like, \emph{e.g.},~\cite{MR2909910}, guarantees to various extents that the Barenblatt solution can fruitfully be used to obtain the large-time behavior of all other solutions, departing from an initial value like in~\eqref{second}. Precise estimates of the difference in time of basic quantities of the real solution with respect to the source-type one are however difficult to obtain and additional conditions are needed for low values of $p>0$.

As main example, let us consider the growth of the second moments of the solution to~\eqref{poro}, for an initial value satisfying~\eqref{second}. For a Barenblatt solution, we easily obtain the exact growth
\[
\frac 1d\ird{|x|^2\,\mathcal U_\star(t,x)}=t^{2/\mu}\,\Theta_\star\quad\mbox{where}\quad\Theta_\star:=\frac{\kappa^2}d\ird{|x|^2\,\mathcal B_\star(x)}\,.
\]
For $p>1$ it has been shown in~\cite{MR2133441} that this time-behaviour is captured by any other solution to~\eqref{poro} satisfying conditions~\eqref{second} for large times. Further studies published in~\cite{2014arXiv1408.6781D} then revealed that, at least in the fast diffusion range, there is a nondecreasing \emph{delay} between the propagation in time of the second moments of the source-type solution and the generic solution to \eqref{poro}. We shall give a simple explanation for this fact in Theorem~\ref{Thm:T3}, and also show that in the porous medium case $p>1$ the delay is nonincreasing.

A way to look at this phenomenon is the following. The second moment of the Barenblatt solution, raised to power $\mu/2$, grows linearly in time. Hence, its first variation in time of $\Theta_\star(t)^{\mu/2}$ is constant, while the second variation is equal to zero. To see how the second moment of any other solution to equation~\eqref{poro} behaves in time, it comes natural to estimate the time variations of $\mathsf G:=\Theta^{\mu/2}$, where the \emph{second moment functional} is defined by
\[
\Theta(t):=\frac 1d\ird{|x|^2\,u(t,x)}\,.
\]
This analysis will lead to the interesting observation that the second variation of $\mathsf G$ has a fixed sign, which implies concavity or convexity in $t$ depending on the value of $p$. Also, since the first variation is invariant with respect to dilations and the solution to~\eqref{poro} is asymptotically self-similar, it follows at once that the first variation satisfies a sharp inequality which connects the second moment to its first variation, and optimality is achieved by Barenblatt profiles.

This idea can also be applied to the study of the behavior of the derivative of the second moment, as predicted by the nonlinear diffusion. It holds
\[\label{fi-va}
\Theta'=2\,\mathsf E
\]
where, up to a sign, the \emph{generalized entropy functional} is defined by
\[
\mathsf E(t):=\ird{|u(t,x)|^p}\,.
\]
For the Barenblatt solution~\eqref{ba-self} one can easily check that $\mathsf E_\star(t)^\sigma$ grows linearly in time if $d\,(1-p)\,\sigma=\mu$. This leads to estimate the time variation of $\mathsf F:=\mathsf E^\sigma$. As in the previous case, we will conclude by showing that the second variation in time of $\mathsf F$ has a negative sign if $p<1$, which implies concavity. In this case also, the first variation is invariant with respect to dilations, which implies a sharp inequality of Gagliardo-Nirenberg-Sobolev type, and optimality is again achieved by Barenblatt profiles.

As the results of this paper clearly indicate, looking at functionals of the solution to the nonlinear diffusion\eqref{poro} which grow linearly in time when evaluated along the source-type solution represent a valid and powerful alternative to the well-known \emph{entropy-entropy production method} developed in the last two decades to investigate the large-time behavior of the solution by scaling the problem in order to obtain a Fokker-Planck equation with a fixed steady state.

This new way of looking at the problem opens completely new questions, which at the moment seem to be very difficult to deal with. For instance, higher order derivatives in time of the second moment of the solution are easily evaluated in correspondence to Barenblatt solutions, by giving a precise growth, say $t^{-\sigma_n}$, where $n$ is the order of the derivative. Hence, one can think to proceed as for the first two cases, by considering the power of order $\sigma_n$ of the $n$-th derivative and by checking if there is a sign.

\medskip With Theorem~\ref{Thm:T1}, we shall start by a simplified proof of the \emph{isoperimetric inequality for R\'enyi entropy powers} in~\cite{MR3200617}, based on based on the linear growth of the functional $\mathsf F$, that is also stated in \cite[Theorem~4.4]{carrillo2014renyi} and emphasize the fact that the method is limited to exponents $p\ge1-\frac1d$. Using this inequality, J.A.~Carrillo and G.~Toscani have been able to establish improved rates of convergence in the non-asymptotic regime of the solutions to~\eqref{poro} in \cite[Theorem~5.1]{carrillo2014renyi}. Since R\'enyi entropy powers are equivalent to \emph{relative entropies relative to best matching Barenblatt solutions} under appropriate conditions on the second moments as shown in \cite{1004,MR3103175}, improved rates have to be related with results obtained in rescaled variables in~\cite{MR3103175}, where the scales were defined in terms of second moments. Equivalently, the \emph{improved entropy-entropy production inequality} of \cite[Theorem~1]{MR3103175} is similar in nature to the \emph{isoperimetric inequality for R\'enyi entropy powers}. As we shall observe in this paper, the \emph{isoperimetric inequality for R\'enyi entropy powers} is in fact a \emph{Gagliardo-Nirenberg inequality} in scale invariant form, which degenerates into Sobolev's inequality in the limit case $p=1-\frac1d$ and explains why it cannot hold true for $p<1-\frac1d$. The \emph{entropy-entropy production inequality} of \cite{MR1940370} is also a Gagliardo-Nirenberg inequality, but not in scale invariant form and it has recently been established in \cite{dolbeault:hal-01081098} that the difference between the scale invariant form and the non scale invariant form is enough to account for improved rates of convergence in Fokker-Planck type equations, \emph{i.e.}, in nonlinear diffusions after a convenient rescaling.

Theorem~\ref{Thm:T2} is devoted to a rather simple observation on second moments, which is however at the core of our paper. It is based on the linear growth of the functional $\mathsf G$. By introducing the \emph{relative entropy with respect to the best matching Barenblatt function}, we establish the result of Theorem~\ref{Thm:T3} on delays. This result has been proved in \cite{2014arXiv1408.6781D} by a much more complicated method, when $p<1$. Here we simply rely on the convexity or concavity of $t\mapsto\mathsf G(t)$, depending whether $p<1$ or $p>1$, and both cases are covered. This analysis also suggests a method to obtain estimates of the delays, that is investigated in Section~\ref{Sec:Delays}.

\medskip Let us conclude this introduction with a brief review of the literature. For considerations on second moment methods in fast diffusion and porous medium equations, we refer to \cite{MR2133441,MR2328935,2014arXiv1408.6781D} and references therein. Sharp Ga\-gliardo-Nirenberg-Sobolev inequalities have been studied in~\cite{MR1940370,MR3103175,MR3200617,dolbeault:hal-01081098} from the point of view of the rates of convergence of the solutions to~\eqref{poro} in the intermediate asymptotics regime, and also for obtaining improved convergence rates in the initial regime. A counterpart of such improved rates is the notion of \emph{delay} which was established in~\cite{2014arXiv1408.6781D} in the fast diffusion regime and will be recovered as a very simple consequence of moment estimates in Theorem~\ref{Thm:T3} and extended to the porous medium case. 

Theorems~\ref{Thm:T1} and~\ref{Thm:T2} follow the same line of thought: compute the time evolution of a functional which grows linearly when evaluated in the case of Barenblatt functions and has some concavity or convexity property otherwise. As a main consequence, we provide a proof of some Gagliardo-Nirenberg-Sobolev inequalities which goes along the lines of \cite{MR1777035,MR1940370,MR2745814} on the one hand, of \cite{MR3200617,carrillo2014renyi,MR3255069} on the other hand, and makes a synthetic link between the two approaches. The first approach is inspired by the entropy functional introduced by J.~Ralston and W.I.~Newman in \cite{MR760591,MR760592}, also known in the literature as the \emph{Tsallis entropy}, while the second one is more related with \emph{R\'enyi entropies} and connected with information theory inspired by \cite{MR823597,MR1768665}. The reader interested in further details is invited to refer to \cite{MR3255069} and references therein for a more detailed account.

\section{Notations, main results and consequences}\label{Sec:Main}

The \emph{entropy} is defined by
\[
\mathsf E:=\ird{u^p}
\]
and the \emph{Fisher information} by
\[
\mathsf I:=\ird{u\,|\nabla v|^2}\quad\mbox{with}\quad v=\frac p{p-1}\,u^{p-1}\,.
\]
If $u$ solves~\eqref{poro}, then
\[
\mathsf E'=(1-p)\,\mathsf I\,.
\]
To compute $\mathsf I'$, we will use the fact that
\be{Eqn:v}
\frac{\partial v}{\partial t}=(p-1)\,v\,\Delta v+|\nabla v|^2
\ee
and get that
\[
\mathsf F:=\mathsf E^\sigma
\]
has a linear growth asymptotically as $t\to+\infty$ if
\[\label{sigma}
\sigma=\frac\mu{d\,(1-p)}=1+\frac2{1-p}\,\(\frac 1d+p-1\)=\frac 2d\,\frac1{1-p}-1\,.
\]
This definition is the same as the one of Section~\ref{Sec:Intro}. The growth is exactly linear in case of Barenblatt profiles, so that $\mathsf E_\star^{\sigma-1}\,\mathsf I_\star$ is independent of $t$ if we denote by $\mathsf E_\star$, $\mathsf I_\star$, $\mathsf F_\star$ \emph{etc.} the entropy, the Fisher information, \emph{etc.} of these Barenblatt profiles.
\begin{thm}\label{Thm:T1} Assume that $p\ge1-\frac1d$ if $d>1$ and $p>0$ if $d=1$. With the above notations, $t\mapsto\mathsf F(t)$ is increasing, $(1-p)\,\mathsf F''(t)\le0$ and
\[
\lim_{t\to+\infty}\frac1t\,\mathsf F(t)=(1-p)\,\sigma\,\lim_{t\to+\infty}\mathsf E^{\sigma-1}\,\mathsf I=(1-p)\,\sigma\,\mathsf E_\star^{\sigma-1}\,\mathsf I_\star\,.
\]
\end{thm}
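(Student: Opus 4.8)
The plan is to establish the three assertions—monotonicity of $\mathsf F$, the sign of $\mathsf F''$, and the asymptotic limit—by computing the first two time derivatives of $\mathsf F=\mathsf E^\sigma$ explicitly and extracting a sign from the second derivative. First I would record that $\mathsf E'=(1-p)\,\mathsf I$ (given in the excerpt) implies
\[
\mathsf F'=\sigma\,\mathsf E^{\sigma-1}\,\mathsf E'=(1-p)\,\sigma\,\mathsf E^{\sigma-1}\,\mathsf I\,.
\]
Since $\mathsf E>0$, $\mathsf I\ge0$, and the hypothesis $p\ge1-\frac1d$ (with $p>0$ when $d=1$) keeps $1-p$ and $\sigma$ of the right signs, monotonicity of $\mathsf F$ follows immediately once the sign bookkeeping is checked. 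The genuinely substantive computation is $\mathsf F''$, which requires differentiating $\mathsf E^{\sigma-1}\,\mathsf I$ in time, and hence computing $\mathsf I'$.

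The key step is to evaluate $\mathsf I'$ using the pressure equation~\eqref{Eqn:v}, namely $\partial_t v=(p-1)\,v\,\Delta v+|\nabla v|^2$. I would differentiate $\mathsf I=\ird{u\,|\nabla v|^2}$ under the integral sign, substitute the equations for $\partial_t u=\Delta u^p$ and $\partial_t v$, and integrate by parts to reexpress everything in terms of the Hessian $\mathrm D^2 v$ and $\Delta v$. The expected outcome, familiar from the Bakry–Émery and Ralston–Newman computations, is a representation of the form
\[
\mathsf I'=-\,2\ird{u^p\,\Big(\big\|\mathrm D^2 v\big\|^2-(1-p)\,(\Delta v)^2\Big)}
\]
up to a positive multiplicative constant and the correct normalization, where $\|\mathrm D^2 v\|^2$ denotes the sum of squares of the entries of the Hessian. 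Combining this with $\mathsf E'=(1-p)\,\mathsf I$ into
\[
\mathsf F''=(1-p)\,\sigma\,\Big[(\sigma-1)\,\mathsf E^{\sigma-2}\,(\mathsf E')\,\mathsf I+\mathsf E^{\sigma-1}\,\mathsf I'\Big]
\]
and regrouping, the claim $(1-p)\,\mathsf F''\le0$ should reduce to a pointwise inequality combining the cross term $(\sigma-1)\,(1-p)\,\mathsf I^2/\mathsf E$ with the $\mathsf I'$ term.

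The hard part will be showing that the resulting integrand has the asserted sign for the full admissible range of $p$. The natural tool is the pointwise matrix inequality $\|\mathrm D^2 v\|^2\ge\frac1d\,(\Delta v)^2$ together with a Cauchy–Schwarz control of the cross term $\mathsf I^2/\mathsf E$ against $\ird{u^p\,(\Delta v)^2}$; the precise choice $\sigma=\frac\mu{d\,(1-p)}$ is exactly what is needed to make these two estimates combine with the correct coefficient, and this is where the threshold $p=1-\frac1d$ enters, since for $p<1-\frac1d$ the coefficient degenerates and the sign is lost. I would therefore carry out the algebra so that the $d$-dependent constants match the definition of $\sigma$, and verify that the deficit in the inequality vanishes precisely when $v$ is a quadratic function, i.e.\ on Barenblatt profiles.

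Finally, for the asymptotic limit, I would argue that $\mathsf F'$ is monotone (by the established sign of $\mathsf F''$) and bounded, so it converges as $t\to+\infty$; since $\mathsf F'=(1-p)\,\sigma\,\mathsf E^{\sigma-1}\,\mathsf I$ and the solution is asymptotically self-similar toward the Barenblatt profile, the limit must coincide with the value $(1-p)\,\sigma\,\mathsf E_\star^{\sigma-1}\,\mathsf I_\star$ computed on the profile, which is constant in $t$ by the scaling identity noted just before the theorem. Identifying $\lim_{t\to+\infty}\frac1t\,\mathsf F(t)$ with $\lim_{t\to+\infty}\mathsf F'(t)$ then follows from a Cesàro/L'Hôpital argument. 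The main obstacle throughout is the pointwise sign of the $\mathsf F''$ integrand; everything else is integration by parts and scaling bookkeeping.
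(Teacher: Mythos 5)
Your proposal is correct and follows essentially the same route as the paper: the identity $\mathsf I'=-2\ird{u^p\,(\|\mathrm D^2v\|^2+(p-1)\,(\Delta v)^2)}$ (the paper's Lemma~\ref{Lem:DerivFisher}), the splitting $\|\mathrm D^2v\|^2=\frac1d(\Delta v)^2+\|\mathrm D^2v-\frac1d\Delta v\,\mathrm{Id}\|^2$ combined with the Cauchy--Schwarz bound $\mathsf I^2\le\mathsf E\ird{u^p(\Delta v)^2}$, the specific choice of $\sigma$ making the coefficients match at the threshold $p=1-\frac1d$, and the identification of the asymptotic slope via scale invariance and convergence to the Barenblatt profile. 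The only remaining work is the bookkeeping you already describe, so I have nothing substantive to add.
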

This result has been established in \cite{MR3200617}. In this paper we give a slightly simpler proof. The result of Theorem~\ref{Thm:T1} amounts to state that $t\mapsto\mathsf F(t)$ is concave if $p<1$ and convex if $p>1$, with an asymptotic slope given by $(1-p)\,\sigma\,\mathsf E_\star^{\sigma-1}\,\mathsf I_\star$. Moreover, we observe that the inequality
\be{GN-Barenblatt}
\mathsf E^{\sigma-1}\,\mathsf I\ge\mathsf E_\star^{\sigma-1}\,\mathsf I_\star
\ee
is equivalent to one of the two following Gagliardo-Nirenberg inequalities:
\begin{description}
\item[(i)] If $1-\frac1d\le p<1$, then~\eqref{GN-Barenblatt} is equivalent to
\be{GN1}
\nrm{\nabla w}2^\theta\,\nrm w{q+1}^{1-\theta}\ge\mathsf C_{\rm{GN}}\,\nrm w{2q}
\ee
where
\[
\theta=\frac dq\,\frac{q-1}{d+2-q\,(d-2)}\,,\quad1<q\le\frac d{d-2}
\]
and equality with optimal constant $\mathsf C_{\rm{GN}}$ is achieved by $\mathcal B_\star^{p-1/2}$. 
\item[(ii)] If $p>1$, then~\eqref{GN-Barenblatt} is equivalent to
\be{GN2}
\nrm{\nabla w}2^\theta\(\ird{w^{2q}}\)^\frac{1-\theta}{2q}\ge\mathsf C_{\rm{GN}}\,\nrm w{q+1}
\ee
where
\[
\theta=\frac dq\,\frac{q-1}{d+2-q\,(d-2)}
\]
and $q$ takes any value in $(0,1)$.
\end{description}
In both cases, we relate Gagliardo-Nirenberg inequalities and~\eqref{GN-Barenblatt} by
\[
u^{p-1/2}=\frac w{\nrm w{2q}}\quad\mbox{with}\quad q=\frac1{2\,p-1}\,.
\]
These considerations show that $p=1-\frac1d$ correspond to Sobolev's inequality: $2q=2\,d/(d-2)$ and $\theta=1$ if $d\ge3$. This case is therefore the threshold case for the validity of the method. Details will be given in Section~\ref{Sec:GN}.

\medskip Using the moment
\[
\Theta:=\frac 1d\ird{|x|^2\,u}\,,
\]
we can introduce the functional
\[
\mathsf G:=\Theta^{1-\eta/2}\quad\mbox{with}\quad\eta=d\,(1-p)\,.
\]
As a function of $t$, $\mathsf G$ also has a linear growth in case of Barenblatt profiles. Again this definition is compatible with the one of Section~\ref{Sec:Intro} since
\[
\frac\mu2=1-\frac\eta2\,.
\]
In the general case, we get that
\[
\mathsf G'=\mu\,\mathsf H\,,
\]
where we define the \emph{R\'enyi entropy power} functional $\mathsf H$ by
\[
\mathsf H:=\Theta^{-\eta/2}\,\mathsf E=\Theta^{\frac d2\,(p-1)}\,\mathsf E
\]
and observe that the corresponding functional $\mathsf H_\star$ for Barenblatt profiles is independent of $t$.
\begin{thm}\label{Thm:T2} Assume that $p\ge1-\frac2d$. With the above notations, $t\mapsto\mathsf G(t)$ is increasing, $(1-p)\,\mathsf H'(t)\ge0$ and
\[
\lim_{t\to+\infty}\frac1t\,\mathsf G(t)=\(1-\frac\eta2\)\lim_{t\to+\infty}\mathsf H=\(1-\frac\eta2\)\mathsf H_\star\,.
\]
\end{thm}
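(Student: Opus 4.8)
The plan is to follow the same scheme as for Theorem~\ref{Thm:T1}, reducing everything to the identity $\mathsf{G}'=\mu\,\mathsf{H}$ together with a single differentiation of $\mathsf{H}$, and then to extract the sign from one application of the Cauchy--Schwarz inequality. The identity $\mathsf{G}'=\mu\,\mathsf{H}$ itself rests on $\Theta'=2\,\mathsf{E}$, which I would obtain by integrating $\int_{\R^d}|x|^2\,\Delta u^p\,dx$ by parts twice, using $\Delta|x|^2=2d$. Monotonicity of $\mathsf{G}$ is then immediate: the exponent $\mu=2-\eta=2-d\,(1-p)$ is nonnegative precisely when $p\ge1-\frac2d$, and $\mathsf{H}=\Theta^{-\eta/2}\,\mathsf{E}$ is a product of strictly positive factors, so $\mathsf{G}'=\mu\,\mathsf{H}\ge0$.

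The heart of the matter is the sign of $(1-p)\,\mathsf{H}'$. Differentiating $\mathsf{H}=\Theta^{-\eta/2}\,\mathsf{E}$ and inserting $\Theta'=2\,\mathsf{E}$ and $\mathsf{E}'=(1-p)\,\mathsf{I}$, I would group the two resulting terms and use $\eta=d\,(1-p)$ to arrive at
\[
\mathsf{H}'=(1-p)\,\Theta^{-\eta/2-1}\big(\Theta\,\mathsf{I}-d\,\mathsf{E}^2\big),\qquad\text{so that}\qquad(1-p)\,\mathsf{H}'=(1-p)^2\,\Theta^{-\eta/2-1}\big(\Theta\,\mathsf{I}-d\,\mathsf{E}^2\big).
\]
The prefactor $(1-p)^2\,\Theta^{-\eta/2-1}$ is nonnegative for every $p$, so the whole assertion collapses to the single pointwise-in-time inequality $\Theta\,\mathsf{I}\ge d\,\mathsf{E}^2$, valid for both $p<1$ and $p>1$; this is what lets one computation cover the convex and the concave regime simultaneously.

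To prove $\Theta\,\mathsf{I}\ge d\,\mathsf{E}^2$ I would exploit the algebraic identity $u\,\nabla v=\nabla u^p$, which follows from $v=\frac p{p-1}\,u^{p-1}$, and then chain two estimates. First, Cauchy--Schwarz gives
\[
\Big(\int_{\R^d}|x|^2\,u\,dx\Big)\Big(\int_{\R^d}u\,|\nabla v|^2\,dx\Big)\ge\Big(\int_{\R^d}|x|\,u\,|\nabla v|\,dx\Big)^2.
\]
Second, $\int_{\R^d}|x|\,u\,|\nabla v|\,dx=\int_{\R^d}|x|\,|\nabla u^p|\,dx\ge\big|\int_{\R^d}x\cdot\nabla u^p\,dx\big|=d\,\mathsf{E}$, the last equality being an integration by parts against $\nabla\cdot x=d$. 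Since $\int_{\R^d}|x|^2\,u\,dx=d\,\Theta$, combining the two lines yields $d\,\Theta\,\mathsf{I}\ge d^2\,\mathsf{E}^2$, that is, exactly $\Theta\,\mathsf{I}\ge d\,\mathsf{E}^2$, and therefore $(1-p)\,\mathsf{H}'\ge0$.

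For the limit, the previous step shows that $\mathsf{H}$ is monotone (nondecreasing if $p<1$, nonincreasing if $p>1$) and hence converges; the known asymptotic self-similarity of solutions of~\eqref{poro} under~\eqref{second} identifies the limit as $\mathsf{H}_\star$. A Ces\`aro / l'H\^opital argument then transfers this to $\lim_{t\to+\infty}\frac1t\,\mathsf{G}(t)=\lim_{t\to+\infty}\mathsf{G}'(t)=\mu\,\mathsf{H}_\star$, which is the asymptotic slope of $\mathsf{G}$. I expect the genuine obstacle to lie not in this two-line inequality but in its rigorous justification across the whole admissible range: one must ensure enough decay of $u$ and $\nabla u^p$ (and, for $p>1$, enough control at the free boundary where $u$ vanishes) for the integrations by parts to carry no boundary contribution and for $\Theta$, $\mathsf{E}$, $\mathsf{I}$ to be finite and differentiable in $t$. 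This is exactly where the threshold $p\ge1-\frac2d$, which guarantees finiteness of the second moment of the Barenblatt profile, becomes indispensable.
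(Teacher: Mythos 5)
Your proposal is correct and follows essentially the same route as the paper: the identity $\mathsf G'=\mu\,\mathsf H$ coming from $\Theta'=2\,\mathsf E$, the computation $(1-p)\,\mathsf H'=(1-p)^2\,\Theta^{-\eta/2-1}\big(\Theta\,\mathsf I-d\,\mathsf E^2\big)$, the key inequality $\Theta\,\mathsf I\ge d\,\mathsf E^2$ obtained from $u\,\nabla v=\nabla(u^p)$ and $\ird{x\cdot\nabla(u^p)}=-\,d\,\mathsf E$ via Cauchy--Schwarz, and the identification of the limit by asymptotic self-similarity plus scale invariance and l'H\^opital. The only cosmetic difference is that you run Cauchy--Schwarz in two steps through $\ird{|x|\,u\,|\nabla v|}$, whereas the paper applies it in one stroke as $\big(\ird{x\cdot u\,\nabla v}\big)^2\le\ird{u\,|x|^2}\,\ird{u\,|\nabla v|^2}$.
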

The function $t\mapsto\mathsf G(t)$ is convex if $p<1$ and concave if $p>1$, with an asymptotic slope given by $(1-\eta/2)\,\mathsf H_\star$. Details will be given in Section~\ref{Sec:Moment}.

\medskip We can also consider the \emph{relative entropy with respect to the best matching Barenblatt function} defined as
\[
\mathcal F[u]:=\inf_{s>0}\mathcal F[u\,|\,\mathcal U_\star^s]
\]
where $\mathcal U_\star^s(x)=\mathcal U_\star(s,x)=s^{-d/\mu}\,\mathcal U_\star^1(s^{-1/\mu}\,x)$ is defined by~\eqref{ba-self}. Here the variable~$s$ plays the role of a scaling parameter, and the \emph{relative entropy} with respect to a given function $\mathcal U$ is defined by
\[
\mathcal F[u\,|\,\mathcal U]:=\frac1{p-1}\ird{\Big[u^p-\mathcal U^p-p\,\mathcal U^{p-1}\,\big(u-\mathcal U\big)\Big]}\,.
\]
A key observation in \cite{1004} is the fact that $\mathcal F[u]=\mathcal F[u\,|\,\mathcal U_\star^s]$ is achieved by a unique Barenblatt solution which satisfies
\[
\ird{|x|^2\,u}=\ird{|x|^2\,\mathcal U_\star^s}\,.
\]
The proof of this fact is a straightforward computation: as a functional of~$\mathcal U$, $\mathcal F[u\,|\,\mathcal U]$ is concave and has at most one maximum point. Hence if we write that $|\mathcal U_\star^s(x)|^{p/(p-1)}=a(s)+b(s)\,|x|^2$, then
\[
\frac d{ds}\mathcal F[u\,|\,\mathcal U_\star^s]=\frac p{p-1}\ird{\(a'(s)+b'(s)\,|x|^2\)\(u-\mathcal U_\star^s\)}\,.
\]
Since $\ird u=\ird{\mathcal U_\star^s}=1$, the term proportional to $a'$ does not contribute and the other one vanishes if and only if the moments are equal. 

After undoing the change of variables, we get that
\be{Eqn:s}
s=\(\frac\Theta{\Theta_\star}\)^\frac\mu2\,.
\ee
With this choice of $s$, $\mathcal U_\star^s$ is the \emph{best matching Barenblatt} function in the sense that this Barenblatt function minimizes the relative entropy $\mathcal F[u\,|\,\mathcal U_\star^s]$ among all Barenblatt functions $(\mathcal U_\star^s)_{s>0}$.

If $u$ is a solution to~\eqref{poro}, for any $t\ge0$, we can define $s$ as a function of $t$ and consider the \emph{delay} which is defined as
\[
\tau(t):=\(\frac{\Theta(t)}{\Theta_\star}\)^\frac\mu2-t\,.
\]
The main result of this paper is that $t\mapsto\tau(t)$ is monotone.
\begin{thm}\label{Thm:T3} Assume that $p\ge1-\frac1d$ and $p\neq1$. With the above notations, the best matching Barenblatt function of a solution $u$ to~\eqref{poro} satisfying~\eqref{second} is $(t,x)\mapsto\mathcal U_\star(t+\tau(t),x)$ and the function $t\mapsto\tau(t)$ is nondecreasing if $p>1$ and nonincreasing if $1-\frac1d\le p<1$.\end{thm}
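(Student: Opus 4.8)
The plan is to turn the statement into a one-line computation of $\tau'$, once the content of Theorem~\ref{Thm:T2} is in hand. First I would dispatch the identification of the best matching profile, which costs nothing: by~\eqref{Eqn:s} the scale realizing the infimum in $\mathcal F[u]=\inf_{s>0}\mathcal F[u\,|\,\mathcal U_\star^s]$ is $s=(\Theta(t)/\Theta_\star)^{\mu/2}$, and the definition of $\tau$ reads precisely $s=t+\tau(t)$. Hence the best matching Barenblatt function at time $t$ is $(t,x)\mapsto\mathcal U_\star(t+\tau(t),x)$, and it remains only to control the sign of $\tau'$.

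Next I would rewrite the delay through the functional $\mathsf G=\Theta^{\mu/2}$ (recall $1-\eta/2=\mu/2$), so that $\tau(t)=\Theta_\star^{-\mu/2}\,\mathsf G(t)-t$. Differentiating and using $\mathsf G'=\mu\,\mathsf H$ gives $\tau'(t)=\mu\,\Theta_\star^{-\mu/2}\,\mathsf H(t)-1$. To recognize the constant $\mu\,\Theta_\star^{-\mu/2}$ I would evaluate the same relation on the Barenblatt solution itself: there $\Theta(t)=t^{2/\mu}\,\Theta_\star$, so $\mathsf G_\star(t)=\Theta_\star^{\mu/2}\,t$, and since $\mathsf G_\star'=\mu\,\mathsf H_\star$ this yields the identity $\mu\,\mathsf H_\star=\Theta_\star^{\mu/2}$. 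Substituting, $\tau'(t)=\big(\mathsf H(t)-\mathsf H_\star\big)/\mathsf H_\star$, so that, $\mathsf H_\star$ being positive, the sign of $\tau'$ is exactly the sign of $\mathsf H(t)-\mathsf H_\star$.

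The sign of $\mathsf H(t)-\mathsf H_\star$ is where Theorem~\ref{Thm:T2} does the work, since it provides both the monotonicity $(1-p)\,\mathsf H'(t)\ge0$ and the limit $\mathsf H(t)\to\mathsf H_\star$ as $t\to+\infty$. When $p>1$ this forces $\mathsf H$ to be nonincreasing and to decrease down to $\mathsf H_\star$, whence $\mathsf H(t)\ge\mathsf H_\star$ and $\tau'\ge0$, i.e.\ $\tau$ is nondecreasing. When $1-\frac1d\le p<1$ the inequality $\mathsf H'\ge0$ makes $\mathsf H$ nondecreasing up to $\mathsf H_\star$, so $\mathsf H(t)\le\mathsf H_\star$ and $\tau'\le0$, i.e.\ $\tau$ is nonincreasing. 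This reproduces the two cases of the statement, and the hypothesis $p\ge1-\frac1d$ is stronger than the $p\ge1-\frac2d$ required by Theorem~\ref{Thm:T2}, so the latter certainly applies.

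The genuinely substantial input is thus entirely packaged inside Theorem~\ref{Thm:T2}, and I expect the main obstacle to lie there rather than in the reduction above: establishing that the monotone quantity $\mathsf H$ converges to the \emph{Barenblatt} value $\mathsf H_\star$, and not merely to some limit, requires the asymptotic self-similarity of solutions to~\eqref{poro}, which is precisely where the restriction $p\ge1-\frac1d$ becomes relevant. Granting that, the only points needing care in the reduction are the slope identity $\mu\,\mathsf H_\star=\Theta_\star^{\mu/2}$ and the elementary logical step ``monotone $+$ known limit $\Rightarrow$ one-sided bound''; the differentiability of $t\mapsto\Theta(t)$ (hence of $\mathsf G$ and $\tau$), together with $\Theta'=2\,\mathsf E$, is the standard regularity caveat inherited from the cited existence theory, and $p\neq1$ excludes only the degenerate heat-equation case in which all these functionals trivialize.
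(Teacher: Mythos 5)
Your proposal is correct and follows essentially the same route as the paper: the paper's one-line observation that $\Theta^{\mu/2}$ grows faster (resp.\ slower) than $t\,\Theta_\star^{\mu/2}$ is exactly your computation $\tau'=(\mathsf H-\mathsf H_\star)/\mathsf H_\star$ combined with the sign of $\mathsf H-\mathsf H_\star$ obtained from the monotonicity and the limit in Theorem~\ref{Thm:T2} (equivalently, inequality~\eqref{pri}). Your write-up merely makes explicit the slope identity $\mu\,\mathsf H_\star=\Theta_\star^{\mu/2}$ and the ``monotone plus known limit'' step that the paper leaves implicit.
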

With $s$ given by~\eqref{Eqn:s}, we notice that the \emph{relative entropy with respect to the best matching Barenblatt function} is given by
\[
\mathcal F[u\,|\,\mathcal U_\star^s]=\frac1{p-1}\ird{\Big[u^p-(\mathcal U_\star^s)^p\Big]}
\]
and it is related with the the \emph{R\'enyi entropy power} functional by
\[
\mathsf H-\mathsf H_\star=\Theta^{\frac d2\,(p-1)}\,\mathsf E-\Theta_\star^{\frac d2\,(p-1)}\,\mathsf E_\star=(p-1)\,\Theta^{\frac d2\,(p-1)}\,\mathcal F[u\,|\,\mathcal U_\star^s]\,.
\]
\begin{figure}[ht]\begin{center}
\includegraphics[width=11cm]{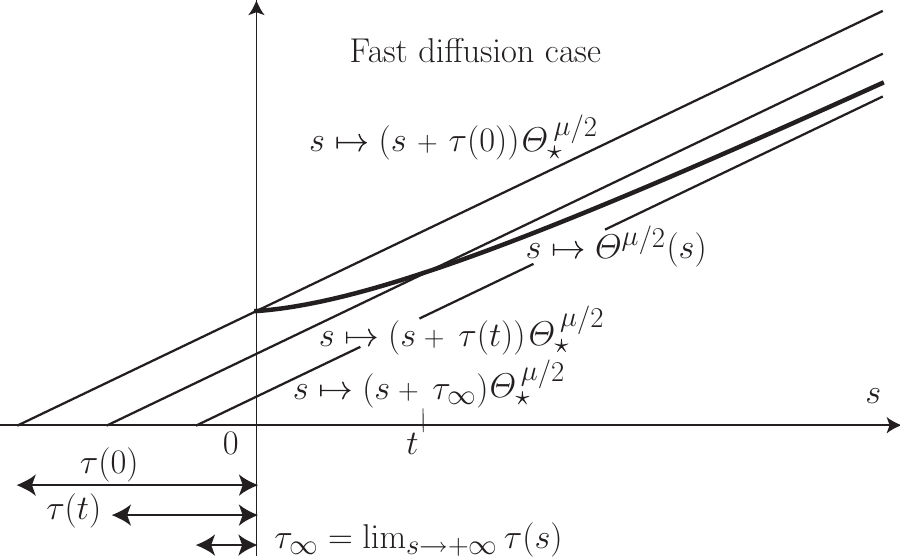}
\caption{\label{F1} In the fast diffusion case, the (bold) curve $s\mapsto\mathsf G(s)=\Theta^{\mu/2}(s)$ is convex, increasing, and its asymptote is $s\mapsto(s+\tau_\infty)\,\Theta_\star^{\mu/2}$. At any time $t\ge0$, we observe that it crosses the line $s\mapsto(s+\tau(t))\,\Theta_\star^{\mu/2}=(s-t+1)\,\Theta^{\mu/2}(t)$ transversally, so that $t\mapsto\tau(t)$ is monotone decreasing, unless the solution is itself a Barenblatt solution up to a time shift.}
\end{center}\end{figure}
\begin{figure}[ht]\begin{center}
\includegraphics[width=11cm]{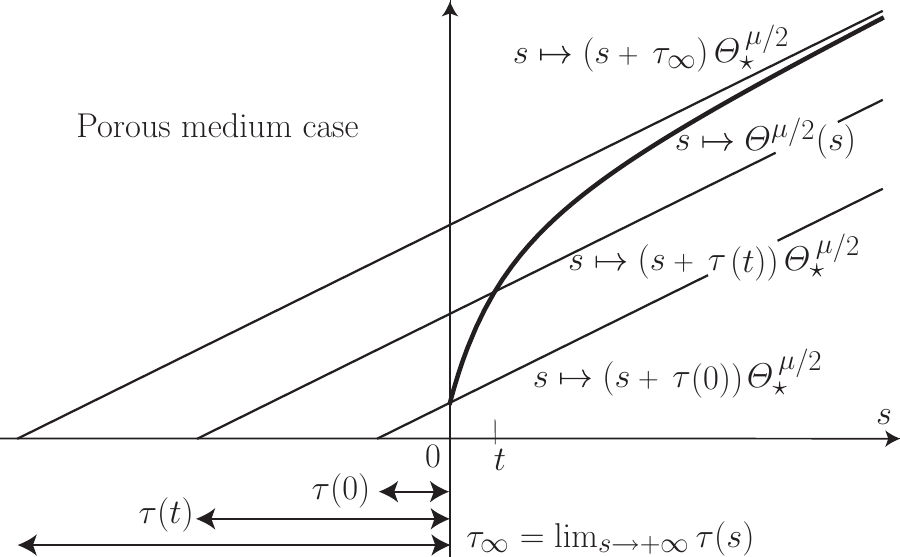}
\caption{\label{F2} In the porous medium case, the curve $s\mapsto\Theta^{\mu/2}(s)$ is still increasing but concave, so that $t\mapsto\tau(t)$ is monotone increasing, unless the solution is itself a Barenblatt solution up to a time shift.}
\end{center}\end{figure}
See Figs.~\ref{F1}-\ref{F2}. An estimate of the delay $t\mapsto\tau(t)$ will be given in Section~\ref{Sec:Delays}.

\section{A direct proof of Gagliardo-Nirenberg inequalities}\label{Sec:GN}

This section is devoted to the proof of Theorem~\ref{Thm:T1} and also to a proof of Gagliardo-Nirenberg inequalities~\eqref{GN1} and \eqref{GN2}. We provide a slightly simplified proof of Lemma~\ref{Lem:DerivFisher} below, compared to the existing literature: see \cite{MR3200617} and references therein. Some of the key computations will be reused in Section~\ref{Sec:Delays}.

\subsection{A preliminary computation}

According to~\cite[Appendix~B]{MR3200617}, we have the following result.
\begin{lem}\label{Lem:DerivFisher} If $u$ solves~\eqref{poro} with initial datum $u(x,t=0)=u_0(x)\ge0$ such that $\ird{u_0}=1$ and $\frac1d\ird{|x|^2\,u_0}<+\infty$, then $v=\frac p{p-1}\,u^{p-1}$ solves~\eqref{Eqn:v} and
\be{BLW}
\mathsf I'=\frac d{dt}\ird{u\,|\nabla v|^2}=-\,2\ird{u^p\,\Big(\|\mathrm D^2v\|^2+(p-1)\,(\Delta v)^2\Big)}\,.
\ee
\end{lem}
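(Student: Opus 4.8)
The plan is to prove the two assertions in turn: first the pressure equation \eqref{Eqn:v}, then the Fisher information identity \eqref{BLW}. The starting point for both is the elementary pointwise identity $u\,\nabla v=\nabla u^p$, which follows from $\nabla v=p\,u^{p-2}\,\nabla u$, together with the companion algebraic relation $u^p=\frac{p-1}p\,u\,v$. The first of these rewrites \eqref{poro} in the conservative form $\partial_t u=\nabla\cdot(u\,\nabla v)$. To obtain \eqref{Eqn:v}, I would compute $\partial_t v=p\,u^{p-2}\,\partial_t u=(p-1)\,\frac vu\,\nabla\cdot(u\,\nabla v)$ and expand the divergence, using $\nabla u\cdot\nabla v=\frac1p\,u^{2-p}\,|\nabla v|^2$ to simplify the gradient contribution; the prefactors then collapse by the algebraic relation above to give exactly $(p-1)\,v\,\Delta v+|\nabla v|^2$.

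For \eqref{BLW}, I would differentiate $\mathsf I=\ird{u\,|\nabla v|^2}$ directly, obtaining one contribution from $\partial_t u$ and one from $\partial_t v$. In the first, substituting $\partial_t u=\nabla\cdot(u\,\nabla v)$ and integrating by parts once yields $-2\ird{u\,\nabla v\cdot\mathrm D^2v\,\nabla v}$, by means of $\nabla|\nabla v|^2=2\,\mathrm D^2v\,\nabla v$. In the second, substituting \eqref{Eqn:v} and differentiating produces three pieces: a term in $u\,|\nabla v|^2\,\Delta v$, a term in $u\,v\,\nabla v\cdot\nabla\Delta v$, and a further term in $u\,\nabla v\cdot\mathrm D^2v\,\nabla v$ which only partially cancels the contribution of $\partial_t u$, leaving an overall multiple that I will track.

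The core of the argument is to recast these remaining terms into the Bochner form. Using $u\,\nabla v=\nabla u^p$ and one integration by parts, the surviving Hessian-type term becomes $-\ird{u^p\,\Delta|\nabla v|^2}$, to which I apply the Bochner identity $\tfrac12\,\Delta|\nabla v|^2=\|\mathrm D^2v\|^2+\nabla v\cdot\nabla\Delta v$; this is precisely what generates the $\|\mathrm D^2v\|^2$ contribution with the correct coefficient $-2$. The two resulting third-order terms in $\nabla v\cdot\nabla\Delta v$ are then merged by the algebraic relation $u\,v=\frac p{p-1}\,u^p$, and a final integration by parts based on $\ird{u^p\,\nabla v\cdot\nabla\Delta v}=-\ird{u\,|\nabla v|^2\,\Delta v}-\ird{u^p\,(\Delta v)^2}$ (again using $u\,\nabla v=\nabla u^p$) cancels the leftover $u\,|\nabla v|^2\,\Delta v$ term exactly and delivers the remaining contribution $-2\,(p-1)\,\ird{u^p\,(\Delta v)^2}$. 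Collecting everything gives \eqref{BLW}.

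The algebra is a robust sequence of integrations by parts and the Bochner formula; the genuine difficulty is analytic, namely justifying that no boundary terms appear in any of these manipulations. For $p<1$ the Barenblatt-type tails decay only algebraically, so I expect the finite second moment hypothesis \eqref{second} together with known decay and smoothing estimates for the fast diffusion flow to be needed to guarantee integrability and the vanishing of the fluxes at infinity. For $p>1$ the difficulty lies instead at the free boundary $\partial\{u>0\}$, where $v$ is merely Lipschitz and $u^p$ degenerates, so I would proceed by approximation, regularizing the initial datum or working on $\{u>\varepsilon\}$ and controlling the edge contributions as $\varepsilon\to0$, in order to make the formal computation rigorous. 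This justification, rather than the identity itself, is the main obstacle.
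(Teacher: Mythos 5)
Your computation is correct and follows essentially the same route as the paper: direct differentiation of $\mathsf I$, substitution of \eqref{poro} and \eqref{Eqn:v}, the relations $u\,\nabla v=\nabla u^p$ and $u\,v=\frac p{p-1}\,u^p$, repeated integration by parts, and the Bochner identity $\tfrac12\,\Delta|\nabla v|^2=\|\mathrm D^2v\|^2+\nabla v\cdot\nabla\Delta v$, with only the bookkeeping order of the integrations by parts differing. Your closing remarks on justifying the absence of boundary terms address a point the paper leaves implicit, but they do not change the argument.
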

\newpage\begin{proof} Let us give a simplified proof of this result. Using~\eqref{poro} and~\eqref{Eqn:v}, we can compute
\begin{eqnarray*}
&&\hspace*{-18pt}\frac d{dt}\ird{u\,|\nabla v|^2}\\
&=&\ird{\frac{\partial u}{\partial t}\,|\nabla v|^2}+\,2\ird{u\,\nabla v\cdot\nabla\frac{\partial v}{\partial t}}\\
&=&\ird{\Delta(u^p)\,|\nabla v|^2}+\,2\ird{u\,\nabla v\cdot\nabla\Big((p-1)\,v\,\Delta v+|\nabla v|^2\Big)}\\
&=&\ird{u^p\,\Delta|\nabla v|^2}\\
&&+\,2\,(p-1)\ird{u\,v\,\nabla v\cdot\nabla\Delta v}+\,2\,(p-1)\ird{u\,\nabla v\cdot\nabla v\,\Delta v}\\
&&+\,2\ird{u\,\nabla v\cdot\nabla|\nabla v|^2}\\
&=&-\ird{u^p\,\Delta|\nabla v|^2}\\
&&+\,2\,(p-1)\ird{u\,v\,\nabla v\cdot\nabla\Delta v}+\,2\,(p-1)\ird{u\,\nabla v\cdot\nabla v\,\Delta v}
\end{eqnarray*}
where the last line is given by an integration by parts:
\[
\ird{u\,\nabla v\cdot\nabla|\nabla v|^2}=\ird{\nabla(u^p)\cdot\nabla|\nabla v|^2}=-\ird{u^p\,\Delta|\nabla v|^2}\,.
\]
1) Using the elementary identity
\[
\frac12\,\Delta\,|\nabla v|^2=\|\mathrm D^2v\|^2+\nabla v\cdot\nabla\Delta v\,,
\]
we get that
\[
\ird{u^p\,\Delta|\nabla v|^2}=2\ird{u^p\,\|\mathrm D^2v\|^2}+2\ird{u^p\,\nabla v\cdot\nabla\Delta v}\,.
\]
2) Since $u\,\nabla v=\nabla(u^p)$, an integration by parts gives
\begin{multline*}
\ird{u\,\nabla v\cdot\nabla v\,\Delta v}=\ird{\nabla(u^p)\cdot\nabla v\,\Delta v}\\
=-\ird{u^p\,(\Delta v)^2}-\ird{u^p\,\nabla v\cdot\nabla\Delta v}
\end{multline*}
and with $u\,v=\frac p{p-1}\,u^p$ we find that
\begin{multline*}
2\,(p-1)\ird{u\,v\,\nabla v\cdot\nabla\Delta v}+\,2\,(p-1)\ird{u\,\nabla v\cdot\nabla v\,\Delta v}\\
=-\,2\,(p-1)\ird{u^p\,(\Delta v)^2}+2\ird{u^p\,\nabla v\cdot\nabla\Delta v}\,.
\end{multline*}
Collecting terms establishes~\eqref{BLW}.\qed\end{proof}

\subsection{The fast diffusion case}
Recall that $\mathsf E=\ird{u^p}$ satisfies $\mathsf E'=(1-p)\,\mathsf I$ with $\mathsf I=\ird{u\,|\nabla v|^2}$ and $v=\frac p{p-1}\,u^{p-1}$. Since
\[
\|\mathrm D^2v\|^2=\frac 1d\,(\Delta v)^2+\left\|\,\mathrm D^2v-\frac 1d\,\Delta v\,\mathrm{Id}\,\right\|^2
\]
by Lemma~\ref{Lem:DerivFisher}, we find that $\mathsf F''=\(\mathsf E^\sigma\)''$ can be computed as
\begin{multline*}
\frac 1{\sigma\,(1-p)}\,\mathsf E^{2-\sigma}\,\(\mathsf E^\sigma\)''=(1-p)\,(\sigma-1)\(\ird{u\,|\nabla v|^2}\)^2\\
\hspace*{3cm}-\,2\,\(\frac 1d+p-1\)\ird{u^p}\ird{u^p\,(\Delta v)^2}\\
-\,2\ird{u^p}\ird{u^p\,\left\|\,\mathrm D^2v-\frac 1d\,\Delta v\,\mathrm{Id}\,\right\|^2}\,.
\end{multline*}
Using $u\,\nabla v=\nabla(u^p)$, we know that
\[
\ird{u\,|\nabla v|^2}=\ird{\nabla(u^p)\cdot\nabla v}=-\ird{u^p\,\Delta v}
\]
and by the Cauchy-Schwarz inequality,
\[
\(\ird{u\,|\nabla v|^2}\)^2\le\ird{u^p}\ird{u^p\,(\Delta v)^2}\,.
\]
With the choice
\[
\sigma=1+\frac2{1-p}\,\(\frac 1d+p-1\)=\frac 2d\,\frac1{1-p}-1\,,
\]
we get that
\[
\frac{\mathsf E^{2-\sigma}\,\(\mathsf E^\sigma\)''}{\sigma\,(1-p)^2}=:-\,\mathsf R[u]
\]
where the remainder terms have been collected as the sum of two squares:
\begin{multline*}
\frac{\mathsf R[u]}{\,\ird{u^p}}=(\sigma-1)\ird{u^p\,\left|\Delta v-\frac{\ird{u\,|\nabla v|^2}}{\ird{u^p}}\right|^2}\\
+\,\frac2{1-p}\ird{u^p\,\left\|\,\mathrm D^2v-\frac 1d\,\Delta v\,\mathrm{Id}\,\right\|^2}\,.
\end{multline*}

Hence we know that $\mathsf F'=\(\mathsf E^\sigma\)'$ is nonincreasing, that is,
\[
\frac1{1-p}\mathsf E^{\sigma-1}\,\mathsf E'=\(\ird{u^p}\)^{\sigma-1}\ird{u\,|\nabla v|^2}:=\mathsf J
\]
is nonincreasing. Since $\mathsf J$ is invariant under scalings as a functional of $u$, this means that
\[
\lim_{t\to\infty}\mathsf J=\mathsf J_\star
\]
where $\mathcal B_\star$ is the Barenblatt function such that $\ird{\mathcal B_\star}=1$ and $\mathsf J_\star$ the corresponding value of the functional $\mathsf J$. Written for the initial datum $u_0=u$, we have shown that
\[
\mathsf J\ge\mathsf J_\star\,.
\]
For any smooth and compactly supported function $w$, if we write $u^{p-1/2}=w/\nrm w{2q}$ with $q=1/(2\,p-1)$, then the inequality amounts to the Gagliar\-do-Nirenberg inequality~\eqref{GN1} and equality is achieved by $\mathcal B_\star^{p-1/2}$. More precisely we have shown the following result.
\begin{prop}\label{Prop:GN} Assume that $1<q<\frac d{d-2}$ if $d\ge3$ and $q>1$ if $d=1$ or $d=2$. With the above notations we have
\begin{multline*}
\frac{4\,p^2}{(p-1)^2\,(2\,p-1)^2}\,\Big(\mathsf J-\mathsf J_\star\Big)=\frac{\nrm{\nabla w}2^2\,\nrm w{q+1}^{2\,(1-\theta)/\theta}}{\nrm w{2q}^{2/\theta}}-\mathsf C_{\rm{GN}}^{2/\theta}\\
=(1-p)\int_0^\infty\mathsf R[u(t,\cdot)]\,dt\ge0\,,
\end{multline*}
where $u(t,\cdot)$ denotes the solution to~\eqref{poro} with initial datum $u$.\end{prop}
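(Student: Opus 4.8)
The statement bundles three claims: the leftmost equality recasts the scale-invariant functional $\mathsf J$ as a Gagliardo--Nirenberg quotient in the variable $w$, the middle equality represents the deficit $\mathsf J-\mathsf J_\star$ as a time integral of the remainder, and the trailing $\ge0$ is the inequality itself. The plan is to dispatch these from right to left, since the monotonicity machinery needed is already in place from Lemma~\ref{Lem:DerivFisher} and the preceding computation of $\mathsf F''$.

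For the middle equality and the sign, I would start from the identities derived above, namely $\mathsf F=\mathsf E^\sigma$ with $\mathsf F'=\sigma\,(1-p)\,\mathsf J$ and $\mathsf F''=-\,\sigma\,(1-p)^2\,\mathsf E^{\sigma-2}\,\mathsf R[u]$, which give $\mathsf J'=-\,(1-p)\,\mathsf E^{\sigma-2}\,\mathsf R[u]$. Integrating this over $(0,\infty)$ and invoking $\lim_{t\to\infty}\mathsf J(t)=\mathsf J_\star$ produces $\mathsf J-\mathsf J_\star$ as the integral of $-\mathsf J'$, that is, of $(1-p)$ times the (weighted) remainder. The sign is then immediate in the admissible range $1-\frac1d\le p<1$: there $1-p>0$, while both coefficients $\sigma-1=\frac2{1-p}\,(\frac1d+p-1)$ and $\frac2{1-p}$ appearing in the sum-of-squares expression for $\mathsf R[u]$ are nonnegative, so $\mathsf R[u]\ge0$ pointwise in $t$ and the integral is $\ge0$.

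The leftmost equality is the bookkeeping step. First I would substitute $u^{p-1/2}=w/\nrm w{2q}$ with $q=1/(2\,p-1)$, equivalently $u=(w/\nrm w{2q})^{2q}$, and compute the two building blocks: since $u^p=(w/\nrm w{2q})^{q+1}$ one gets $\mathsf E=\nrm w{q+1}^{q+1}\,\nrm w{2q}^{-(q+1)}$, and since $u\,|\nabla v|^2=p^2\,u^{2p-3}\,|\nabla u|^2$ collapses after the substitution to a constant multiple of $|\nabla(w/\nrm w{2q})|^2$, one gets $\mathsf I$ proportional to $\nrm{\nabla w}2^2\,\nrm w{2q}^{-2}$. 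Forming $\mathsf J=\mathsf E^{\sigma-1}\,\mathsf I$ and using $1-p=(q-1)/(2q)$ together with the definition of $\sigma$, the powers of $\nrm w{q+1}$ and $\nrm w{2q}$ reduce exactly to $2\,(1-\theta)/\theta$ and $2/\theta$ with $\theta=\frac dq\,\frac{q-1}{d+2-q\,(d-2)}$; collecting the purely $p$-dependent multiplicative constant yields the stated prefactor, and $\mathsf C_{\rm{GN}}^{2/\theta}$ is identified as the value of the quotient at the Barenblatt profile, so that equality holds for $w=\mathcal B_\star^{p-1/2}$ and the assertion reduces to~\eqref{GN1}.

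The hard part is not this algebra but the justification of $\lim_{t\to\infty}\mathsf J(t)=\mathsf J_\star$ and the legitimacy of all the integrations by parts. Since $\mathsf J$ is scale invariant as a functional of $u$, its limit should be governed by the self-similar (Barenblatt) asymptotics of~\eqref{poro}; making this precise requires the convergence results quoted from the literature, enough regularity and decay to run Lemma~\ref{Lem:DerivFisher} rigorously (a standard approximation and regularization argument), and the finiteness of $\int_0^\infty\mathsf E^{\sigma-2}\,\mathsf R[u]\,dt$, which is precisely the deficit $\mathsf J(0)-\mathsf J_\star$. A final density remark then extends the resulting inequality from admissible solutions to all smooth, compactly supported $w$.
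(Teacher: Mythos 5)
Your proposal follows essentially the same route as the paper: Lemma~\ref{Lem:DerivFisher} feeds the computation of $\mathsf F''=(\mathsf E^\sigma)''$, the choice of $\sigma$ and the Cauchy--Schwarz inequality turn the result into the nonnegative sum of squares $\mathsf R[u]$, monotonicity plus scale invariance of $\mathsf J$ give $\mathsf J\ge\lim_{t\to+\infty}\mathsf J=\mathsf J_\star$, and the substitution $u^{p-1/2}=w/\nrm w{2q}$ with $q=1/(2\,p-1)$ converts this into~\eqref{GN1}. Your closing remarks on justifying the limit, the integrations by parts, and the density argument address precisely the points the paper leaves implicit, so the proposal is correct and matches the paper's argument.
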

Hence we have shown that, as a function of $t$, $\mathsf F$ is concave increasing and we have identified its asymptotic slope, which is given by the optimal constant in the Gagliardo-Nirenberg inequality
\[
\lim_{t\to+\infty}\mathsf J(t)=\mathsf J_\star=\mathsf C_{\rm{GN}}^{2/\theta}\,.
\]

\subsection{The porous medium case}

The computations are the same. With the same definition~\eqref{sigma} as in the fast diffusion case, $\sigma$ is negative, $t\mapsto\mathsf F$ is convex increasing and the limit of its derivative is achieved among Barenblatt functions. The Gagliardo-Nirenberg inequality now takes the form~\eqref{GN2}. Again equality is achieved by $\mathcal B_\star^{p-1/2}$. Details are left to the reader.

\section{The second moment and the R\'enyi entropy power functional}\label{Sec:Moment}

This section is devoted to the proof of Theorem~\ref{Thm:T2}. Let us consider the subsequent time derivatives of the functional
\[
\mathsf G:=\Theta^{1-\frac\eta2}\quad\mbox{with}\quad\eta=d\,(1-p)=2-\mu\,.
\]
It is straightforward to check that
\[
\mathsf G'=\mu\,\mathsf H\quad\mbox{with}\quad\mathsf H:=\Theta^{-\frac\eta2}\,\mathsf E
\]
where the \emph{R\'enyi entropy power} functional is defined by
\[
\mathsf H:=\Theta^{-\frac\eta2}\,\mathsf E\,.
\]
We recall that $\mathsf E'=(1-p)\,\mathsf I$. It is straightforward to check that
\[
\frac{\mathsf H'}{1-p}=\Theta^{-1-\frac\eta2}\(\Theta\,\mathsf I- d\,\mathsf E^2\)=\frac{d\,\mathsf E^2}{\Theta^{\frac\eta2+1}}\,(\mathsf q-1)\quad\mbox{with}\quad\mathsf q:=\frac{\Theta\,\mathsf I}{d\,\mathsf E^2}\ge1
\]
because
\begin{multline*}
d\,\mathsf E^2=\frac 1d\(-\ird{x\cdot\nabla(u^p)}\)^2=\frac 1d\(\ird{x\cdot u\,\nabla v}\)^2\\
\le\frac 1d\ird{u\,|x|^2}\ird{u\,|\nabla v|^2}=\Theta\,\mathsf I
\end{multline*}
by the Cauchy-Schwarz inequality. This proves that $(1-p)\,\mathsf H$ is monotone increasing and from the theory in \cite{MR1940370}, we know that its limit is given by the source-type Barenblatt solutions $\mathcal U_{p,t}$ defined by~\eqref{ba-self}. Since $\mathsf H$ is also scale invariant, its limit is in fact given by the value of the functional for $\mathcal B_\star$, that~is
\[
\lim_{t\to+\infty}\mathsf H(t)=\lim_{t\to+\infty}\Theta(t)^{\frac d2\,(p-1)}\,\mathsf E(t)=\Theta_\star^{\frac d2\,(p-1)}\,\mathsf E_\star=:\mathsf H_\star
\]
and we get
\be{pri}
\begin{array}{ll}
\Theta(t)^{\frac d2\,(p-1)}\,\mathsf E(t)\le\mathsf H_\star\quad&\mbox{if}\quad p<1\,,\\[5pt]
\Theta(t)^{\frac d2\,(p-1)}\,\mathsf E(t)\ge\mathsf H_\star\quad&\mbox{if}\quad p>1\,.
\end{array}
\ee
Taking the logarithm on both sides of inequality~\eqref{pri}, and considering that $p<1$, we obtain the equivalent inequality
\[\label{pr2}
\frac1{1-p}\log\ird{u^p}-\frac d2\,\log\(\frac1d\ird{|x|^2\,u}\)\ge\frac{\log\mathsf H_\star}{1-p}=\frac{\log\mathsf E_\star}{1-p}-\frac d2\,\log\Theta_\star
\]
in which we can recognize the well-known inequality for R\'enyi entropies obtained in~\cite{CHV,LYZ}. After multiplying by $(1-p)$ and taking the exponential, we realize that this is also equivalent to the inequality
\[
\mathcal F[u\,|\,\mathcal U_\star^1]\ge0
\]
which is a well-known consequence of Jensen's inequality: see for instance \cite{MR1940370}.

\section{Delays}\label{Sec:Delays}

This section is devoted to delays. At any time $t\ge0$, we consider the best matching Barenblatt solution and the corresponding delay $\tau(t)$
\[
\mathcal U_\star^{s(t)}(x)=\mathcal U_\star(s(t),x)\,,\quad s(t)=t+\tau(t)
\]
according to the definitions of Section~\ref{Sec:Main}. After proving Theorem~\ref{Thm:T3}, we give an estimate of $\tau(t)$.

\medskip\noindent\emph{Proof of Theorem~\ref{Thm:T3}.} With $p\ge1-\frac1d$, we know that $\mu>0$. At any time $t\ge0$, let us consider the solution $t\mapsto u(t,\cdot)$ to~\eqref{poro}. The scale $s(t)$ of the best matching Barenblatt function is determined by~\eqref{Eqn:s}. By observing that $t\mapsto\Theta^{\mu/2}$ grows faster (resp. slower) than $t\mapsto t\,\Theta_\star^{\mu/2}$ if $p>1$ (resp.~if $p<1$), where $t\mapsto t\,\Theta_\star^{\mu/2}$ is the rate of growth corresponding to the self-similar Barenblatt function given by~\eqref{ba-self}, we get that $t\mapsto\tau(t)$ is nondecreasing (resp. nonincreasing) in the porous medium case $p>1$ (resp. fast diffusion case $p<1$). See Figs.~\ref{F1}-\ref{F2}. \qed

\medskip Next we study a quantitative estimate of delays, which relies on the commutation of the third derivative in $t$ of $\mathsf G$. This approach is parallel to the results in \cite{2014arXiv1410.2722T} in the linear case.

Let us recall that $\Theta'=2\,\mathsf E$, $\mathsf E'=(1-p)\,\mathsf I$ and $\mathsf J=\mathsf E^{\sigma-1}\,\mathsf I$ is such that
\[
0\ge\mathsf E^{2-\sigma}\,\mathsf J'=(1-p)\,(\sigma-1)\,\mathsf I^2+\mathsf E\,\mathsf I'
\]
so that 
\[
\mathsf I'\le-\,(1-p)\,(\sigma-1)\,\frac{\mathsf I^2}{\mathsf E}=-\,\frac2d\,(1-\eta)\,\frac{\mathsf I^2}{\mathsf E}\,.
\]
Hence
\be{H''}
\frac{\mathsf H''}{1-p}=\(\Theta^{-\frac\eta2}\,\mathsf I-\,d\,\Theta^{-\frac\eta2-1}\,\mathsf E^2\)'\le\frac{d\,\mathsf E^3}{\Theta^{\frac\eta2+2}}\,\Big(\eta+2-\,3\,\eta\,\mathsf q-\,2\,(1-\eta)\,\mathsf q^2\Big)
\ee
according to the computations of Section~\ref{Sec:Moment}. We observe that $\mathsf H''\le0$ if $p<1$ and $\mathsf H''\ge0$ if $p>1$.
\begin{thm}\label{Thm:T3bis} Under the assumptions of Theorem~\ref{Thm:T3}, if $p>1-\frac1d$ and $p\neq1$, then the delay satisfies
\[
\lim_{t\to+\infty}|\tau(t)-\tau(0)|\ge|1-p|\,\frac{\Theta(0)^{1-\frac d2(1-p)}}{2\,\mathsf H_\star}\,\frac{\big(\mathsf H_\star-\mathsf H(0)\big)^2}{\Theta(0)\,\mathsf I(0)-d\,\mathsf E(0)^2}
\]
\end{thm}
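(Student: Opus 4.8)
The plan is to convert the total change of the delay into a time integral of the \emph{R\'enyi entropy power} functional $\mathsf H$ and then to bound that integral from below using the convexity in time encoded in~\eqref{H''}. First I would differentiate $\tau$. Writing $\tau(t)=\mathsf G(t)/\Theta_\star^{\mu/2}-t$ (which agrees with the definition since $\mathsf G=\Theta^{\mu/2}$) and using $\mathsf G'=\mu\,\mathsf H$, I note that applying the identity $\mathsf G'=\mu\,\mathsf H$ to the exact Barenblatt solution, for which $\mathsf G_\star(t)=t\,\Theta_\star^{\mu/2}$, forces $\Theta_\star^{\mu/2}=\mu\,\mathsf H_\star$. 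This gives the clean formula
\[
\tau'(t)=\frac{\mathsf H(t)}{\mathsf H_\star}-1=\frac{\mathsf H(t)-\mathsf H_\star}{\mathsf H_\star}\,.
\]
By Theorem~\ref{Thm:T2} the quantity $\mathsf H-\mathsf H_\star$ keeps a constant sign (nonpositive if $p<1$, nonnegative if $p>1$) and tends to $0$, so $\tau$ is monotone with a limit $\tau(\infty)$ and
\[
|\tau(\infty)-\tau(0)|=\frac1{\mathsf H_\star}\int_0^{+\infty}\big|\mathsf H(t)-\mathsf H_\star\big|\,dt\,.
\]
If the right-hand side is infinite the estimate is trivial, so I may assume the integral converges.

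Next I would lower-bound this integral. Set $g(t):=\big|\mathsf H(t)-\mathsf H_\star\big|\ge0$. From the sign of $\mathsf H''$ observed after~\eqref{H''} ($\mathsf H''\le0$ for $p<1$, $\mathsf H''\ge0$ for $p>1$), the function $g$ is convex; it is moreover decreasing, since $(1-p)\,\mathsf H'\ge0$, and tends to $0$. A convex, decreasing, nonnegative function lies above its tangent at $t=0$, that is $g(t)\ge g(0)+g'(0)\,t$ on the interval $[0,t_0]$ where the right-hand side stays nonnegative, with $t_0=g(0)/|g'(0)|$. Integrating the tangent over $[0,t_0]$ yields
\[
\int_0^{+\infty}g(t)\,dt\ge\int_0^{t_0}\big(g(0)+g'(0)\,t\big)\,dt=\frac{g(0)^2}{2\,|g'(0)|}=\frac{\big(\mathsf H_\star-\mathsf H(0)\big)^2}{2\,|\mathsf H'(0)|}\,,
\]
because $g(0)^2=(\mathsf H_\star-\mathsf H(0))^2$ and $|g'(0)|=|\mathsf H'(0)|$ in both cases.

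Finally I would evaluate $\mathsf H'(0)$ through the explicit expression obtained in Section~\ref{Sec:Moment},
\[
\mathsf H'=(1-p)\,\Theta^{-1-\frac\eta2}\big(\Theta\,\mathsf I-d\,\mathsf E^2\big),\qquad\eta=d\,(1-p)\,,
\]
where $\Theta\,\mathsf I-d\,\mathsf E^2\ge0$ by the Cauchy-Schwarz inequality already used to show $\mathsf q\ge1$. Substituting $|\mathsf H'(0)|=|1-p|\,\Theta(0)^{-1-\eta/2}\big(\Theta(0)\,\mathsf I(0)-d\,\mathsf E(0)^2\big)$ into the combination $|\tau(\infty)-\tau(0)|\ge(\mathsf H_\star-\mathsf H(0))^2/(2\,\mathsf H_\star\,|\mathsf H'(0)|)$ and collecting the powers of $\Theta(0)$ produces the announced lower bound in terms of $\mathsf H_\star$, $\mathsf H(0)$, $\Theta(0)$, $\mathsf I(0)$ and $\mathsf E(0)$.

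The main obstacle is the convexity step, and this is exactly where the hypothesis $p>1-\frac1d$ enters. The sign of $\mathsf H''$ is not apparent from~\eqref{H''}: one must check that the quadratic $\mathsf q\mapsto\eta+2-3\,\eta\,\mathsf q-2\,(1-\eta)\,\mathsf q^2$ is nonpositive for every admissible $\mathsf q\ge1$. It vanishes at $\mathsf q=1$, has derivative $\eta-4<0$ there, and opens downward precisely when $1-\eta>0$, i.e. when $\eta<1$, which is equivalent to $p>1-\frac1d$; this is what makes $g$ genuinely convex and the tangent-line estimate legitimate. A secondary point requiring care is the justification that $\mathsf H\to\mathsf H_\star$ and that $\tau(\infty)$ exists, which I would borrow from Theorem~\ref{Thm:T2} and from the monotonicity of $\tau$ proved in Theorem~\ref{Thm:T3}.
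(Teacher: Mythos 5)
Your argument is correct and is essentially the paper's own proof in differentiated form: the paper bounds $\mathsf G$ by its second-order Taylor polynomial (legitimate because $\mathsf G'''=\mu\,\mathsf H''$ has a fixed sign) and compares with the asymptote $\mathsf G_\star'\,(t+\tau(t))$, which after one integration is exactly your tangent-line bound on the convex function $|\mathsf H-\mathsf H_\star|$ — your $t_0$ is the paper's $t_\star$ and the two optimizations give the same constant; your explicit verification that the quadratic in $\mathsf q$ in~\eqref{H''} is nonpositive for $\mathsf q\ge1$ precisely when $\eta<1$ is a detail the paper merely asserts. One caveat: carried out exactly, your substitution gives the prefactor $\Theta(0)^{1+\frac d2(1-p)}/|1-p|$, not the $|1-p|\,\Theta(0)^{1-\frac d2(1-p)}$ displayed in the statement; this agrees (up to the placement of the factor $1-p$, where the paper's last line also disagrees with its own algebra) with the final formula in the paper's proof, so the mismatch is an inconsistency internal to the paper rather than an error of yours — but you should not assert that the computation ``produces the announced lower bound'' without flagging it.
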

\begin{proof} Assume first that $p<1$ and recall that for any $t\ge0$
\[
\mathsf G'(t)\le\lim_{t\to+\infty}\mathsf G'(t)=\(1-\frac\eta2\)\mathsf H_\star=:\mathsf G_\star'\,.
\]
Since $\mathsf G''$ is nonincreasing, we have the estimate
\[
\mathsf G(t)\le\mathsf G(0)+\mathsf G'(0)\,t+\tfrac12\,\mathsf G''(0)\,t^2\quad\forall\,t\ge0
\]
so that
\[
\mathsf G(0)+\mathsf G_\star'\,t-\mathsf G(t)\ge\(\mathsf G_\star'-\mathsf G'(0)\)t-\tfrac12\,\mathsf G''(0)\,t^2
\]
is maximal for $t=t_\star:=(\mathsf G_\star'-\mathsf G'(0))/\mathsf G''(0)$. As a consequence, since $\mathsf G(0)=\mathsf G_\star'\,\tau(0)$ and $\mathsf G(t_\star)=\mathsf G_\star'\,\big(t_\star+\tau(t_\star)\big)$, we get that
\[
\mathsf G_\star'\,\tau(0)-\mathsf G_\star'\,\tau(t_\star)=\mathsf G(0)+\mathsf G_\star'\,t_\star-\mathsf G_\star'\,\big(t_\star+\tau(t_\star)\big)\ge\frac{\big(\mathsf G_\star'-\mathsf G'(0)\big)^2}{2\,\mathsf G''(0)}\,,
\]
that is
\[
\tau(0)-\tau(t_\star)\ge\frac{\big(\mathsf G_\star'-\mathsf G'(0)\big)^2}{2\,\mathsf G_\star'\,\mathsf G''(0)}=(1-p)\,\frac{\Theta(0)^{1+\frac\eta2}}{2\,\mathsf H_\star}\,\frac{\big(\mathsf H_\star-\mathsf H(0)\big)^2}{\Theta(0)\,\mathsf I(0)-d\,\mathsf E(0)^2}\,.
\]
We conclude by observing that $t\mapsto\tau(0)-\tau(t)$ is nondecreasing.

Estimates for $p>1$ are very similar, up to signs. \end{proof}

Further estimates can be obtained easily. Let us illustrate our new approach by a result on
\be{q}
\mathsf q:=\frac{\Theta\,\mathsf I}{d\,\mathsf E^2}=1+\frac{\Theta^{\frac\eta2+1}}{d\,\mathsf E^2}\,\frac{\mathsf H'}{1-p}
\ee
in the fast diffusion case. We denote by $\mathsf q_0$, $\Theta_0$,.. the initial values of $\mathsf q$, $\Theta$, \emph{etc.}
\begin{prop}\label{Thm:T4} Assume that $1-\frac1d\le p<1$. Then for any $t\ge0$, we have
\[
\mathsf q(t)\le\frac{\mathsf q_0\,\Theta(t)}{\mathsf q_0\,\Theta(t)-(\mathsf q_0-1)\,\Theta_0}=:\bar{\mathsf q}(t)
\]
and
\[
\tau(t)\le\tau_0\,\exp\left[\int_0^t\frac{ds}{s+\frac{\Theta_0}{\mu\,\mathsf E_0}-\frac\eta\mu\int_0^s(\bar{\mathsf q}-1)\,ds}\right]-t\,.
\]
\end{prop}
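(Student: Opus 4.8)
The plan is to establish the two estimates in sequence, since the bound on $\tau(t)$ will rely on the bound $\bar{\mathsf q}(t)$ obtained for $\mathsf q$. For the first estimate, I would start from the differential inequality for $\mathsf H''$ recorded in~\eqref{H''}, but it is cleaner to work directly with a closed evolution inequality for $\mathsf q$ itself. Recall from~\eqref{q} that $\mathsf q=\Theta\,\mathsf I/(d\,\mathsf E^2)$. Differentiating in $t$ and using the three basic identities $\Theta'=2\,\mathsf E$, $\mathsf E'=(1-p)\,\mathsf I$, together with the upper bound on $\mathsf I'$ from the displayed inequality just before~\eqref{H''}, namely $\mathsf I'\le-\frac2d\,(1-\eta)\,\mathsf I^2/\mathsf E$, I would produce a first-order differential inequality of Riccati type satisfied by $\mathsf q$. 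Written in terms of $\Theta$ as the independent variable (using $d\Theta=2\,\mathsf E\,dt$), the inequality should separate into the form $\frac{d\mathsf q}{d\Theta}\le -\frac{(\mathsf q-1)\,\mathsf q}{\Theta}\cdot c$ for an appropriate constant $c$ fixed by the value of $\sigma-1=\frac2d\,\frac{1-\eta/2}{1-p}-1$; after checking the coefficients one expects $c=1$ precisely in the regime $p\ge1-\frac1d$, which is why the hypothesis is imposed. Integrating this separable inequality from the initial data $(\Theta_0,\mathsf q_0)$ then yields exactly the stated algebraic bound $\mathsf q(t)\le\bar{\mathsf q}(t)=\mathsf q_0\,\Theta/(\mathsf q_0\,\Theta-(\mathsf q_0-1)\,\Theta_0)$.

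For the second estimate, I would translate the delay $\tau$ into the scaling variable $s$ and differentiate. From $\mathsf G=\Theta^{\mu/2}$ and $\mathsf G'=\mu\,\mathsf H$ with $\mathsf H=\Theta^{-\eta/2}\,\mathsf E$, together with the definition $\tau=(\Theta/\Theta_\star)^{\mu/2}-t$, one computes $\tau'(t)$ in terms of $\mathsf H$ and $\mathsf H_\star$; the logarithmic derivative $\tau'/\tau$ should organize into a ratio whose numerator involves $\mathsf H-\mathsf H_\star$ and hence, through the relation~\eqref{q}, the factor $(\mathsf q-1)$. The aim is to show $(\log\tau)'$ equals $1/\big(s+\ldots\big)$ with the bracket built from $\Theta$, $\mathsf E$ and an integral of $(\mathsf q-1)$; substituting the bound $\mathsf q\le\bar{\mathsf q}$ into that integral and using Grönwall's lemma then gives the exponential upper bound on $\tau(t)$.

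The main obstacle I anticipate is the bookkeeping in the first step: deriving the correct Riccati inequality for $\mathsf q$ and verifying that the separated form has coefficient exactly $1$ (so that the clean algebraic $\bar{\mathsf q}$ emerges) requires carefully combining $\Theta'$, $\mathsf E'$ and the bound on $\mathsf I'$, and it is easy to misplace factors of $d$, $\eta$ and $\sigma-1$. The threshold $p=1-\frac1d$ (equivalently $\eta=1$, where $1-\eta=0$ kills the quadratic Riccati term) is the delicate endpoint, and I would check it separately to confirm the inequality degenerates gracefully rather than reversing. The second step is then essentially a Grönwall argument once $(\log\tau)'$ has been put into the advertised integral form; the only subtlety there is ensuring the denominator $s+\frac{\Theta_0}{\mu\,\mathsf E_0}-\frac\eta\mu\int_0^s(\bar{\mathsf q}-1)\,ds$ stays positive so that the exponential bound is meaningful, which should follow from $\bar{\mathsf q}\ge1$ and the monotonicity of $\Theta$.
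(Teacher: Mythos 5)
Your proposal follows essentially the same route as the paper: a separable Riccati-type inequality $\mathsf q'\le\mathsf q\,(1-\mathsf q)\,\Theta'/\Theta$ obtained from $\Theta'=2\,\mathsf E$, $\mathsf E'=(1-p)\,\mathsf I$ and the bound on $\mathsf I'$, integrated in the variable $\Theta$, followed by a lower bound on $\mathsf G/\mathsf G'$ and a Gr\"onwall-type integration of the logarithmic derivative. Two details to adjust in execution: the coefficient of the quadratic term is identically $1$ since $(1-\eta)+\eta=1$ (the hypothesis $p\ge1-\frac1d$ is needed only so that the estimate $\mathsf I'\le-\frac2d\,(1-\eta)\,\mathsf I^2/\mathsf E$ from Theorem~\ref{Thm:T1} is available, not to fix that coefficient), and the quantity whose logarithmic derivative is controlled is $s(t)=t+\tau(t)$, that is $\mathsf G/\mathsf G_\star'$, rather than $\tau$ itself, which is why the final bound takes the form $\tau(t)\le\tau_0\,e^{\cdots}-t$.
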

\begin{proof} Using~\eqref{H''} and~\eqref{q}, we obtain
\[
\mathsf q'\le2\,\mathsf q\,(1-\mathsf q)\,\frac{\mathsf E}\Theta=\mathsf q\,(1-\mathsf q)\,\frac{\Theta'}\Theta\,.
\]
We can integrate and get the first estimate.

To obtain the integral estimate for $\tau$, we compute
\[
\frac{\mathsf G'}{\mathsf G}=\mu\,\frac{\mathsf E}\Theta
\]
and
\[
\frac1{1-p}\,\frac{\mathsf G''}{\mathsf G'}=\frac1{1-p}\,\frac{\mathsf H'}{\mathsf H}=d\,\frac{\mathsf E}\Theta\,(\mathsf q-1)\le\frac d\mu\,\frac{\mathsf G'}{\mathsf G}\,(\bar{\mathsf q}-1)\,,
\]
so that
\[
1-\(\frac{\mathsf G}{\mathsf G'}\)'=\frac{\mathsf G\,\mathsf G''}{(\mathsf G')^2}\le\frac\eta\mu\,(\bar{\mathsf q}-1)
\]
and finally
\[
\frac{\mathsf G}{\mathsf G'}\ge\frac{\Theta_0}{\mu\,\mathsf E_0}+t-\frac\eta\mu\int_0^t(\bar{\mathsf q}(s)-1)\,ds\,.
\]
Using the fact that $\mathsf G(t)=\mathsf G_\star'\,\big(t+\tau(t)\big)$, we conclude after one more integration with respect to $t$ and get the second estimate.\qed\end{proof}

Let us conclude this paper by a few remarks.
\begin{enumerate}
\item[(i)] The quantity $\mathsf q-1$ is a measure of the distance to the set of the Barenblatt profiles when $p>1-\frac 1d$: see \cite{dolbeault:hal-01081098} for more details.
\item[(ii)] Improved rates of decay for $\mathsf E$ and, as a consequence, improved asymptotics for $\mathsf F$ and $\mathsf G$ can be achieved by considering the estimates found in \cite{carrillo2014renyi}.
\item[(iii)] Alternatively, improved functional inequalities as in \cite{dolbeault:hal-01081098} can be used directly to get improved asymptotics for $\mathsf F$ and $\mathsf G$.
\end{enumerate}

\begin{acknowledgements} This work has been partially supported by the projects \emph{STAB}, \emph{No\-NAP} and \emph{Kibord} (J.D.) of the French National Research Agency (ANR), and by MIUR project ``Optimal mass transportation, geometrical and functional inequalities with applications'' (G.T.). J.D.~thanks the Department of Mathematics of the University of Pavia for inviting him and supporting his visit.
\par\medskip\noindent{\small\copyright\,2014 by the authors. This paper may be reproduced, in its entirety, for non-commercial purposes.}\end{acknowledgements}


\begin{thebibliography}{10}
\providecommand{\url}[1]{{#1}}
\providecommand{\urlprefix}{URL }
\expandafter\ifx\csname urlstyle\endcsname\relax
  \providecommand{\doi}[1]{DOI~\discretionary{}{}{}#1}\else
  \providecommand{\doi}{DOI~\discretionary{}{}{}\begingroup
  \urlstyle{rm}\Url}\fi

\bibitem{barenblatt1952some}
Barenblatt, G.I.: On some unsteady motions of a liquid and gas in a porous
  medium.
\newblock Prikl. Mat. Mekh \textbf{16}(1), 67--78 (1952)

\bibitem{MR2745814}
Carlen, E.A., Carrillo, J.A., Loss, M.: Hardy-{L}ittlewood-{S}obolev
  inequalities via fast diffusion flows.
\newblock Proc. Natl. Acad. Sci. USA \textbf{107}(46), 19,696--19,701 (2010).
\newblock \doi{10.1073/pnas.1008323107}.
\newblock \urlprefix\url{http:://dx.doi.org/10.1073/pnas.1008323107}

\bibitem{MR1777035}
Carrillo, J.A., Toscani, G.: Asymptotic {$\mathrm L^1$}-decay of solutions of
  the porous medium equation to self-similarity.
\newblock Indiana Univ. Math. J. \textbf{49}(1), 113--142 (2000).
\newblock \doi{10.1512/iumj.2000.49.1756}.
\newblock \urlprefix\url{http://dx.doi.org/10.1512/iumj.2000.49.1756}

\bibitem{carrillo2014renyi}
{Carrillo}, J.A., {Toscani}, G.: {Renyi entropy and improved equilibration
  rates to self-similarity for nonlinear diffusion equations}.
\newblock Nonlinearity \textbf{27}(12), 3159--3177 (2014).
\newblock \urlprefix\url{http://dx.doi.org/10.1088/0951-7715/27/12/3159}

\bibitem{MR2328935}
Carrillo, J.A., V{\'a}zquez, J.L.: Asymptotic complexity in filtration
  equations.
\newblock J. Evol. Equ. \textbf{7}(3), 471--495 (2007).
\newblock \doi{10.1007/s00028-006-0298-z}.
\newblock \urlprefix\url{http://dx.doi.org/10.1007/s00028-006-0298-z}

\bibitem{CHV}
Costa, J., Hero, A., Vignat, C.: On solutions to multivariate maximum
  α-entropy problems.
\newblock In: A.~Rangarajan, M.~Figueiredo, J.~Zerubia (eds.) Energy
  Minimization Methods in Computer Vision and Pattern Recognition,
  \emph{Lecture Notes in Computer Science}, vol. 2683, pp. 211--226. Springer
  Berlin Heidelberg (2003).
\newblock \doi{10.1007/978-3-540-45063-4_14}.
\newblock \urlprefix\url{http://dx.doi.org/10.1007/978-3-540-45063-4_14}

\bibitem{MR823597}
Costa, M.H.M.: A new entropy power inequality.
\newblock IEEE Trans. Inform. Theory \textbf{31}(6), 751--760 (1985).
\newblock \doi{10.1109/TIT.1985.1057105}.
\newblock \urlprefix\url{http:://dx.doi.org/10.1109/TIT.1985.1057105}

\bibitem{MR1940370}
Del~Pino, M., Dolbeault, J.: Best constants for {G}agliardo-{N}irenberg
  inequalities and applications to nonlinear diffusions.
\newblock J. Math. Pures Appl. (9) \textbf{81}(9), 847--875 (2002).
\newblock \doi{10.1016/S0021-7824(02)01266-7}.
\newblock \urlprefix\url{http://dx.doi.org/10.1016/S0021-7824(02)01266-7}

\bibitem{1004}
Dolbeault, J., Toscani, G.: Fast diffusion equations: matching large time
  asymptotics by relative entropy methods.
\newblock Kinetic and Related Models \textbf{4}(3), 701--716 (2011).
\newblock \urlprefix\url{http://dx.doi.org/10.3934/krm.2011.4.701}

\bibitem{MR3103175}
Dolbeault, J., Toscani, G.: Improved interpolation inequalities, relative
  entropy and fast diffusion equations.
\newblock Ann. Inst. H. Poincar\'e Anal. Non Lin\'eaire \textbf{30}(5),
  917--934 (2013).
\newblock \doi{10.1016/j.anihpc.2012.12.004}.
\newblock \urlprefix\url{http://dx.doi.org/10.1016/j.anihpc.2012.12.004}

\bibitem{dolbeault:hal-01081098}
Dolbeault, J., Toscani, G.: {Stability results for logarithmic Sobolev and
  Gagliardo-Niren\-berg inequalities} (2014).
\newblock \urlprefix\url{http:://hal.archives-ouvertes.fr/hal-01081098}.
\newblock Submitted

\bibitem{2014arXiv1408.6781D}
{Dolbeault}, J., {Toscani}, G.: {Best matching Barenblatt profiles are
  delayed}.
\newblock To appear in Journal of Physics A: Mathematical and Theoretical
  (2015).
\newblock \urlprefix\url{http://arxiv.org/abs/1408.6781}

\bibitem{MR2909910}
Fila, M., V{\'a}zquez, J.L., Winkler, M., Yanagida, E.: Rate of convergence to
  {B}arenblatt profiles for the fast diffusion equation.
\newblock Arch. Ration. Mech. Anal. \textbf{204}(2), 599--625 (2012).
\newblock \doi{10.1007/s00205-011-0486-z}.
\newblock \urlprefix\url{http:://dx.doi.org/10.1007/s00205-011-0486-z}

\bibitem{MR586735}
Friedman, A., Kamin, S.: The asymptotic behavior of gas in an {$n$}-dimensional
  porous medium.
\newblock Trans. Amer. Math. Soc. \textbf{262}(2), 551--563 (1980).
\newblock \doi{10.2307/1999846}.
\newblock \urlprefix\url{http://dx.doi.org/10.2307/1999846}

\bibitem{LYZ}
Lutwak, E., Yang, D., Zhang, G.: Cram\'er-{R}ao and moment-entropy inequalities
  for {R}enyi entropy and generalized {F}isher information.
\newblock IEEE Trans. Inform. Theory \textbf{51}(2), 473--478 (2005).
\newblock \doi{10.1109/TIT.2004.840871}.
\newblock \urlprefix\url{http://dx.doi.org/10.1109/TIT.2004.840871}

\bibitem{MR760591}
Newman, W.I.: A {L}yapunov functional for the evolution of solutions to the
  porous medium equation to self-similarity. {I}.
\newblock J. Math. Phys. \textbf{25}(10), 3120--3123 (1984)

\bibitem{MR760592}
Ralston, J.: A {L}yapunov functional for the evolution of solutions to the
  porous medium equation to self-similarity. {II}.
\newblock J. Math. Phys. \textbf{25}(10), 3124--3127 (1984)

\bibitem{MR3200617}
Savar{\'e}, G., Toscani, G.: The concavity of {R}\'enyi entropy power.
\newblock IEEE Trans. Inform. Theory \textbf{60}(5), 2687--2693 (2014).
\newblock \doi{10.1109/TIT.2014.2309341}.
\newblock \urlprefix\url{http://dx.doi.org/10.1109/TIT.2014.2309341}

\bibitem{MR2133441}
Toscani, G.: A central limit theorem for solutions of the porous medium
  equation.
\newblock J. Evol. Equ. \textbf{5}(2), 185--203 (2005).
\newblock \doi{10.1007/s00028-005-0183-1}.
\newblock \urlprefix\url{http://dx.doi.org/10.1007/s00028-005-0183-1}

\bibitem{2014arXiv1410.2722T}
{Toscani}, G.: {A concavity property for the reciprocal of Fisher information
  and its consequences on Costa's EPI}.
\newblock ArXiv e-prints  (2014).
\newblock \urlprefix\url{http://arxiv.org/abs/1410.2722}

\bibitem{MR3255069}
Toscani, G.: R\'enyi {E}ntropies and {N}onlinear {D}iffusion {E}quations.
\newblock Acta Appl. Math. \textbf{132}, 595--604 (2014).
\newblock \doi{10.1007/s10440-014-9933-9}.
\newblock \urlprefix\url{http://dx.doi.org/10.1007/s10440-014-9933-9}

\bibitem{MR2286292}
V{\'a}zquez, J.L.: The porous medium equation.
\newblock Oxford Mathematical Monographs. The Clarendon Press, Oxford
  University Press, Oxford (2007).
\newblock Mathematical theory

\bibitem{MR1768665}
Villani, C.: A short proof of the ``concavity of entropy power''.
\newblock IEEE Trans. Inform. Theory \textbf{46}(4), 1695--1696 (2000).
\newblock \doi{10.1109/18.850718}.
\newblock \urlprefix\url{http://dx.doi.org/10.1109/18.850718}

\bibitem{zel1950towards}
Zel'dovich, Y.B., Kompaneets, A.: Towards a theory of heat conduction with
  thermal conductivity depending on the temperature.
\newblock Collection of papers dedicated to 70th birthday of Academician AF
  Ioffe, Izd. Akad. Nauk SSSR, Moscow pp. 61--71 (1950)

\end{thebibliography}



\end{document}